\newtheorem{theorem}{Theorem}[section]
\newtheorem{corollary}[theorem] {Corollary}
\newtheorem{definition}[theorem]{Definition}
\newtheorem{proposition}[theorem]{Proposition}
\title{This is the title}
\begin{document}
\begin{center}
{\bf{THE NONCOMMUTATIVE $\ell_1-\ell_2$ INEQUALITY FOR HILBERT C*-MODULES AND THE EXACT CONSTANT}}\\
K. MAHESH KRISHNA AND P. SAM JOHNSON  \\
Department of Mathematical and Computational Sciences\\ 
National Institute of Technology Karnataka (NITK), Surathkal\\
Mangaluru 575 025, India  \\
Emails: kmaheshak@gmail.com, sam@nitk.edu.in

Date: \today\\
\end{center}

\hrule
\vspace{0.5cm}
%--------------------------------------
\textbf{Abstract}: Let $\mathcal{A}$ be a unital C*-algebra. Then the theory of Hilbert C*-modules tells that 
\begin{align*}
\sum_{i=1}^{n}(a_ia_i^*)^\frac{1}{2}\leq \sqrt{n} \left(\sum_{i=1}^{n}a_ia_i^*\right)^\frac{1}{2}, \quad \forall n \in \mathbb{N}, \forall a_1, \dots, a_n \in \mathcal{A}.
\end{align*}
By modifications of    arguments of Botelho-Andrade, Casazza, Cheng, and Tran given in 2019, for certain tuple $x=(a_1, \dots, a_n) \in \mathcal{A}^n$, we give a method to compute a positive element $c_x$ in the C*-algebra $\mathcal{A}$ such that the equality 
	\begin{align*}
\sum_{i=1}^{n}(a_ia_i^*)^\frac{1}{2}=c_x \sqrt{n} \left(\sum_{i=1}^{n}a_ia_i^*\right)^\frac{1}{2}.
\end{align*}
holds. We give an application for the integral of G. G. Kasparov. We also derive the formula for the exact constant for the continuous $\ell_1-\ell_2$ inequality.\\
\textbf{Keywords}:  C*-algebra, Hilbert C*-module, Hilbert space.

\textbf{Mathematics Subject Classification (2020)}:  46L05, 46L08, 46C05.

%\tableofcontents

\section{Introduction}
Let $\mathbb{K}=\mathbb{C}$ or $\mathbb{R}$ and  $x \in \mathbb{K}^n$. Universally known $\ell_1-\ell_2$ inequality for Hilbert spaces states that $\|x\|_1\leq \sqrt{n}\|x\|_2.$ In 2019, Botelho-Andrade, Casazza, Cheng, and Tran 
\cite{BOTELHOCASAZZACHENGTRAN} gave a characterization which allows to compute a constant $c_x$, for a given $x$ such that $\|x\|_1= c_x \sqrt{n}\|x\|_2.$ First we recall this result. 
\begin{definition}\cite{BOTELHOCASAZZACHENGTRAN}\label{CASAZZADEFINITION}
	A vector $x=\frac{1}{\sqrt{n}}(c_1, \dots, c_n) \in \mathbb{K}^n$ is said to be a constant modulus vector if $|c_i|=1$, for all $i=1,\dots, n$.	
\end{definition}	
\begin{theorem}\cite{BOTELHOCASAZZACHENGTRAN}\label{CASAZZARESULT}
Let  $x=(a_1, \dots, a_n) \in \mathbb{K}^n$. The following are equivalent.
\begin{enumerate}[\upshape(i)]
	\item We have 
	\begin{align*}
	\|x\|_1=\left(1-\frac{c_x}{2}\right) \sqrt{n} \|x\|_2.
	\end{align*}
	\item We have 
	\begin{align*}
	\sum_{i=1}^{n}\left|\frac{|a_i|}{\|x\|_2}-\frac{1}{\sqrt{n}}\right|^2=c_x.
	\end{align*}
	\item The infimum of the distance from $\frac{x}{\|x\|_2}$ to the constant modulus vector is $\sqrt{c_x}$.
\end{enumerate}
In particular, 
\begin{align*}
\|x\|_1\leq  \sqrt{s}\|x\|_2 \iff \left(1-\frac{c_x}{2}\right) \sqrt{n} \leq \sqrt{s} \iff 1-\frac{c_x}{2} \leq \sqrt{\frac{s}{n}}   .
\end{align*}	
\end{theorem}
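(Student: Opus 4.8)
The plan is to reduce all three conditions to a single scalar quantity, namely the ratio $\rho := \|x\|_1/(\sqrt{n}\,\|x\|_2)$, and to show that each of (i), (ii), (iii) is equivalent to the relation $c_x = 2(1-\rho)$. Throughout I assume $x \neq 0$ so that $\|x\|_2 > 0$ and the normalization is legitimate. I would set $b_i := |a_i|/\|x\|_2 \geq 0$ and record the two basic identities $\sum_{i=1}^{n} b_i^2 = 1$ and $\sum_{i=1}^{n} b_i = \|x\|_1/\|x\|_2 = \sqrt{n}\,\rho$, which do all the work.

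For (i) $\Leftrightarrow$ (ii), I would simply expand the square appearing in (ii):
\[
\sum_{i=1}^{n}\left(b_i - \frac{1}{\sqrt{n}}\right)^2 = \sum_{i=1}^{n} b_i^2 - \frac{2}{\sqrt{n}}\sum_{i=1}^{n} b_i + n\cdot\frac{1}{n} = 2 - \frac{2}{\sqrt{n}}\cdot\frac{\|x\|_1}{\|x\|_2}.
\]
Setting this equal to $c_x$ gives $c_x = 2 - 2\rho$, which rearranges to $\|x\|_1 = (1 - c_x/2)\sqrt{n}\,\|x\|_2$; reading the chain backwards yields the converse. So this equivalence is a one-line binomial expansion using the two identities above.

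The substantive part is (iii). Writing $u := x/\|x\|_2$, so that $\|u\|_2 = 1$, and letting $v = \tfrac{1}{\sqrt{n}}(c_1,\dots,c_n)$ range over constant modulus vectors (each with $\|v\|_2 = 1$), I would expand
\[
\|u - v\|_2^2 = 2 - 2\,\mathrm{Re}\langle u, v\rangle = 2 - \frac{2}{\sqrt{n}}\,\mathrm{Re}\sum_{i=1}^{n} \frac{a_i}{\|x\|_2}\,\overline{c_i}.
\]
The key step is the pointwise optimization: since $\mathrm{Re}(a_i\overline{c_i}) \leq |a_i|$ for every unimodular $c_i$, with equality attained by choosing $c_i = a_i/|a_i|$ when $a_i \neq 0$ (and any unimodular $c_i$ otherwise), the supremum of $\mathrm{Re}\langle u, v\rangle$ over all such $v$ equals $\tfrac{1}{\sqrt{n}}\sum_{i=1}^{n} |a_i|/\|x\|_2 = \rho$. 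Hence the infimum of $\|u - v\|_2^2$ equals $2 - 2\rho = c_x$, so the infimum distance is $\sqrt{c_x}$, matching (iii). I expect this phase-alignment argument to be the main obstacle, though only in its bookkeeping: one must check that the optimizing $v$ is genuinely a constant modulus vector (the free choice of $c_i$ at coordinates with $a_i = 0$ makes this transparent) and note that the infimum is in fact attained, since the constant modulus vectors form a compact set.

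Finally, for the ``in particular'' clause I would substitute (i) into $\|x\|_1 \leq \sqrt{s}\,\|x\|_2$ and cancel the common positive factor $\|x\|_2$ to obtain $(1 - c_x/2)\sqrt{n} \leq \sqrt{s}$, then divide by $\sqrt{n}$ for the last equivalence. The only care needed is again $x \neq 0$; the degenerate case $x = 0$ can be handled separately or excluded at the outset.
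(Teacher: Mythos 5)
Your proof is correct and follows essentially the same route the paper takes for its analogues of this statement (Theorems \ref{GENERAL} and \ref{IMP}): expand the square and use $\sum_{i}|a_i|^2/\|x\|_2^2=1$ to get (i) $\iff$ (ii), then align phases $c_i=a_i/|a_i|$ to evaluate the infimum in (iii). Note that the paper itself only cites Theorem \ref{CASAZZARESULT} from \cite{BOTELHOCASAZZACHENGTRAN} without proof, so the comparison here is with its proofs of the analogous C*-module and $\mathcal{L}^2(X)$ versions, which use exactly this expansion-plus-phase-alignment argument.
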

Theorem \ref{CASAZZARESULT} says that as long as we have equality connecting one-norm and two-norm, the constant can be determined using two-norm and the dimension of space. Further, it also helps to find the distance between $\frac{x}{\|x\|_2}$ to certain types of vectors (constant modulus vectors). This result found uses in nonlinear diffusion and diffusion state distances  \cite{MAGGIONIMURPHY, COWENDEVKOTA}. A variation of Theorem \ref{CASAZZARESULT} which concerns subspaces is the following.
\begin{theorem}\cite{BOTELHOCASAZZACHENGTRAN}\label{CASAZZAPRO}
	Let  $W$ be a   subspace of $\mathbb{K}^n$ and let  $ P:\mathbb{K}^n \to W$ be onto orthogonal projection. Then the following are equivalent.	
	\begin{enumerate}[\upshape(i)]
		\item For every unit vector $x \in W$,
		$
		\|x\|_1\leq \left(1-\frac{c_x}{2}\right)\sqrt{n}.
		$
		\item The distance of any unit vector in $ W$ to any constant modulus vector $ x \in W$ is greater than or equal to $\sqrt{c_x}$.
		\item For every constant modulus vector $x \in W$,
		$
		\|Px\|_2\leq 1-\frac{c_x}{2}.
	$
	\end{enumerate}
\end{theorem}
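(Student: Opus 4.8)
The plan is to prove Theorem~\ref{CASAZZAPRO} by reducing it to the pointwise characterization already established in Theorem~\ref{CASAZZARESULT}, and by exploiting the duality between the inequality $\|x\|_1 \le (1-\tfrac{c_x}{2})\sqrt{n}$ and the geometry of the orthogonal projection $P$. The central observation is that for a \emph{unit} vector $x \in W$, condition (i) of Theorem~\ref{CASAZZARESULT} reads $\|x\|_1 = (1-\tfrac{c_x}{2})\sqrt{n}$, and by the equivalence with condition (iii) there, $\sqrt{c_x}$ is exactly the distance from $x$ to the set of constant modulus vectors. So the inequality in (i) of the present theorem is equivalent to saying that every unit vector of $W$ is at distance at least $\sqrt{c_x}$ from the constant modulus vectors, which is precisely the content of (ii). This establishes (i) $\iff$ (ii) almost directly.

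\textbf{The (i) $\iff$ (iii) equivalence.} For this I would use the standard fact that for an orthogonal projection $P$ onto $W$ and any vector $y$, $\|Py\|_2 = \langle Py, y\rangle^{1/2}$ and, more usefully, that $\|Py\|_2 = \sup_{x \in W, \|x\|_2=1} |\langle x, y\rangle|$. First I would take a constant modulus vector $x \in W$; by Definition~\ref{CASAZZADEFINITION} its entries have modulus $1/\sqrt{n}$, so $\|x\|_2 = 1$ and $\|x\|_1 = \sqrt{n}$. The key computation is to relate $\|Px\|_2$ to the distance from $x$ to $W$: since $x$ itself lies in $W$ we would instead consider a general constant modulus vector $z$ (not necessarily in $W$) and write $z = Pz + (I-P)z$, so that $\|Pz\|_2^2 = 1 - \mathrm{dist}(z,W)^2$. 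Then I would translate the bound $\|Pz\|_2 \le 1-\tfrac{c_x}{2}$ into a lower bound on $\mathrm{dist}(z,W)$, and tie this back to the distance between unit vectors of $W$ and constant modulus vectors via the symmetry of the projection.

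A cleaner route I would carry out in parallel is to cycle the implications (i) $\Rightarrow$ (ii) $\Rightarrow$ (iii) $\Rightarrow$ (i). The step (i) $\Rightarrow$ (ii) follows from Theorem~\ref{CASAZZARESULT}(iii) applied to each unit $x \in W$. For (ii) $\Rightarrow$ (iii), given a constant modulus vector $x \in W$, I would let $u = Px/\|Px\|_2$ be the normalized projection (a unit vector in $W$) and estimate the distance $\|u - x\|_2$, using the orthogonality $\langle Px, x - Px\rangle = 0$; condition (ii) gives $\|u-x\|_2 \ge \sqrt{c_x}$, and expanding the square together with $\|x\|_2 = 1$ yields $\|Px\|_2 \le 1 - \tfrac{c_x}{2}$ after rearrangement. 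For (iii) $\Rightarrow$ (i), I would run this computation in reverse, choosing for a unit vector $x \in W$ an optimal nearby constant modulus vector and invoking Theorem~\ref{CASAZZARESULT} to convert the projection bound back into the $\ell_1$ estimate.

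The main obstacle I anticipate is the (ii) $\Leftrightarrow$ (iii) passage, specifically the bookkeeping of which vector is the constant modulus vector and which is the unit vector of $W$, since the roles are swapped between the two statements: in (ii) the unit vector ranges over $W$ and the constant modulus vector is fixed in $W$, whereas (iii) bounds $\|Px\|_2$ for constant modulus $x$. Making the distance identity $\|u - x\|_2^2 = 2 - 2\|Px\|_2$ precise (and checking it holds with the correct normalization when $x$ is constant modulus so that $\langle Px, x\rangle = \|Px\|_2^2$) is where the care is needed; everything else reduces to the already-proved scalar characterization in Theorem~\ref{CASAZZARESULT}.
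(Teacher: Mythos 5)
The paper itself gives no proof of Theorem~\ref{CASAZZAPRO} (it is imported from \cite{BOTELHOCASAZZACHENGTRAN}); the closest in-paper comparison is the proof of its continuous analogue, Theorem~\ref{PRETHM}. Your plan matches that proof in its essentials: (i)~$\iff$~(ii) by reducing to the distance formula of Theorem~\ref{CASAZZARESULT} (equivalently, the identity $\operatorname{dist}(x,\mathrm{CMV})^2=2-2\|x\|_1/\sqrt{n}$ for unit $x$), and (ii)~$\Rightarrow$~(iii) by testing (ii) against $u=Px/\|Px\|_2$ and using $\langle Px,x\rangle=\|Px\|_2^2$ to get $\|u-x\|_2^2=2-2\|Px\|_2$, which is exactly the computation in the proof of Theorem~\ref{PRETHM}. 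The one genuine divergence is how the implication is closed: the paper goes (iii)~$\Rightarrow$~(ii) directly by invoking the lemma that $Px/\|Px\|_2$ is the \emph{closest} unit vector of $W$ to $x$ (Theorem~\ref{IMPROVEDPROJECTION}), whereas you propose cycling through (iii)~$\Rightarrow$~(i) via the duality $\|u\|_1/\sqrt{n}=\langle u,z\rangle=\langle u,Pz\rangle\le\|Pz\|_2$ for the phase vector $z$ of $u$; both are correct, and your route avoids the projection lemma at the cost of the constant bookkeeping you already flag. That bookkeeping concern is real but is an artifact of how the theorem is stated (the subscript on $c_x$ is attached to a unit vector in (i) and to a constant modulus vector in (ii)--(iii), and read literally a constant modulus vector \emph{in} $W$ makes (iii) degenerate since then $Px=x$); any coherent proof must fix a single constant $c$ for the subspace, as you implicitly do, so I would not count this as a gap in your argument.
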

We organized this paper as follows. In Section \ref{MODULES}, we obtain a  result (Theorem \ref{GENERAL}), which is similar to first two implications of Theorem \ref{CASAZZARESULT},  in the context of Hilbert C*-modules. A partial result is obtained (Proposition \ref{PREVIOUS}) which corresponds to (iii) in Theorem \ref{CASAZZARESULT}. In Section  \ref{CONTINUOUSSECION}, we derive results which are similar to Theorems \ref{CASAZZARESULT} and \ref{CASAZZAPRO}, namely  Theorems \ref{IMP} and \ref{PRETHM}, respectively, for the function space $\mathcal{L}^2(X)$ whenever $\mu(X)<\infty$.

 \section{The noncommutative $\ell_1-\ell_2$ inequality for Hilbert C*-modules and the exact constant}\label{MODULES}
 
 Let $\mathcal{A}$ be a unital C*-algebra. Then the space  $\mathcal{A}^n$ becomes (left) Hilbert C*-module over the C*-algebra $\mathcal{A}$ w.r.t. the inner product 
 \begin{align*}
 \langle x, y\rangle\coloneqq \sum_{i=1}^{n}a_ib_i^*, \quad \forall x=(a_1, \dots, a_n), y=(b_1, \dots, b_n) \in \mathcal{A}^n
 \end{align*}
 and the norm 
 \begin{align*}
 \|x\|\coloneqq \|\langle x, x\rangle\|^\frac{1}{2}=\left\|\sum_{i=1}^{n}a_ia_i^*\right\|^\frac{1}{2}, \quad \forall x=(a_1, \dots, a_n)  \in \mathcal{A}^n
 \end{align*}
 (see \cite{LANCE, PASCHKE} for Hilbert C*-modules). 
 Let $a_1, \dots, a_n \in \mathcal{A}$ and let 
 
 \begin{align*}
 x=((a_1a_1^*)^\frac{1}{2}, \dots, (a_na_n^*)^\frac{1}{2}), \quad y=(1, \dots,  1)\in \mathcal{A}^n.
 \end{align*}
 By applying the Cauchy-Schwarz inequality in Hilbert C*-modules (Proposition 1.1 in \cite{LANCE}) for this pair we get 
 \begin{align*}
 \left(\sum_{i=1}^{n}(a_ia_i^*)^\frac{1}{2}\right)^2\leq n \sum_{i=1}^{n}a_ia_i^*, \quad \forall n \in \mathbb{N}, \forall a_1, \dots, a_n \in \mathcal{A}.
 \end{align*}
 By taking C*-algebraic square root (see Theorem 1.4.11 in \cite{LIN})
 \begin{align}\label{ELL1TWO}
 \sum_{i=1}^{n}(a_ia_i^*)^\frac{1}{2}\leq \sqrt{n} \left(\sum_{i=1}^{n}a_ia_i^*\right)^\frac{1}{2}, \quad \forall n \in \mathbb{N}, \forall a_1, \dots, a_n \in \mathcal{A}.
 \end{align}
 
 We call the Inequality (\ref{ELL1TWO}) as the  noncommutative $\ell_1-\ell_2$ inequality for Hilbert C*-modules. A standard result in C*-algebra is that an element $a\in \mathcal{A}$ is positive if and only if $a=bb^*$ for some $b\in \mathcal{A}$. Thus Inequality (\ref{ELL1TWO}) can also be written as 
 \begin{align*}
 \sum_{i=1}^{n}a_ia_i^*\leq \sqrt{n} \left(\sum_{i=1}^{n}(a_ia_i^*)^2\right)^\frac{1}{2}, \quad \forall n \in \mathbb{N}, \forall a_1, \dots, a_n \in \mathcal{A}.
 \end{align*}
   Note that Inequality (\ref{ELL1TWO}) is the  $\ell_1-\ell_2$ inequality for Hilbert spaces whenever the C*-algebra is the field of scalars.

  \begin{theorem}\label{GENERAL}
  	Let  $x=(a_1, \dots, a_n) \in \mathcal{A}^n$ be such that $\langle x, x \rangle $ is invertible.  The following are equivalent.
  	\begin{enumerate}[\upshape(i)]
  		\item We have 
  			\begin{align*}
  		\left(\sum_{i=1}^{n}(a_ia_i^*)^\frac{1}{2}\right)\langle x, x \rangle^\frac{1}{2}+\langle x, x \rangle^\frac{1}{2}\sum_{i=1}^{n}(a_ia_i^*)^\frac{1}{2}=\sqrt{n}\langle x, x \rangle^\frac{1}{2}(2-c_x)\langle x, x \rangle^\frac{1}{2}.
  		\end{align*}
  		\item We have 
  		\begin{align*}
  		\sum_{i=1}^{n}\left(\langle x, x \rangle^\frac{-1}{2}(a_ia_i^*)^\frac{1}{2}-\frac{1}{\sqrt{n}}\right)\left(\langle x, x \rangle^\frac{-1}{2}(a_ia_i^*)^\frac{1}{2}-\frac{1}{\sqrt{n}}\right)^*=c_x.
  		\end{align*}
  	
  	\end{enumerate}
 \end{theorem}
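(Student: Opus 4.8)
The plan is to show that each of (i) and (ii) is equivalent to one and the same identity for the positive element $c_x$, whence they are equivalent to each other. Throughout I would abbreviate $b_i\coloneqq(a_ia_i^*)^{\frac12}$, which is positive and hence self-adjoint, and set $S\coloneqq\sum_{i=1}^nb_i$ together with $T\coloneqq\langle x,x\rangle=\sum_{i=1}^na_ia_i^*=\sum_{i=1}^nb_i^2$. Because $T$ is positive and invertible by hypothesis, the elements $T^{\frac12}$ and $T^{-\frac12}$ exist, are positive and self-adjoint, and satisfy $T^{-\frac12}T^{\frac12}=T^{\frac12}T^{-\frac12}=1$. This is the only place the invertibility assumption enters, and it is precisely what makes every manipulation below reversible.

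First I would unwind statement (ii). Writing $w_i\coloneqq T^{-\frac12}b_i-\frac{1}{\sqrt n}$, the left-hand side of (ii) is exactly $\sum_{i=1}^nw_iw_i^*$, which is manifestly positive, consistent with $c_x$ being a positive element. Using $b_i^*=b_i$ and $(T^{-\frac12})^*=T^{-\frac12}$ one gets $w_i^*=b_iT^{-\frac12}-\frac{1}{\sqrt n}$, and expanding the product yields
\begin{align*}
w_iw_i^*=T^{-\frac12}b_i^2T^{-\frac12}-\frac{1}{\sqrt n}T^{-\frac12}b_i-\frac{1}{\sqrt n}b_iT^{-\frac12}+\frac1n.
\end{align*}
Summing over $i$ and substituting $\sum_ib_i^2=T$ and $\sum_ib_i=S$ collapses the first term to $T^{-\frac12}TT^{-\frac12}=1$ and the constant term to $1$, so that (ii) is equivalent to
\begin{align*}
c_x=2-\frac{1}{\sqrt n}\left(T^{-\frac12}S+ST^{-\frac12}\right).
\end{align*}

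Next I would reduce statement (i) to this same identity. The key device is to conjugate the equation in (i) by $T^{-\frac12}$ on both sides simultaneously; this symmetric conjugation is forced by noncommutativity, since one cannot simply cancel factors. Multiplying on the left and on the right by $T^{-\frac12}$ and using $T^{-\frac12}T^{\frac12}=T^{\frac12}T^{-\frac12}=1$ turns the left-hand side $S T^{\frac12}+T^{\frac12}S$ into $T^{-\frac12}S+ST^{-\frac12}$ and the right-hand side into $\sqrt n\,(2-c_x)$. Rearranging gives precisely the displayed identity for $c_x$, and since conjugation by the invertible element $T^{-\frac12}$ is reversible, this reduction is an equivalence. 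Combining the two reductions establishes (i) $\Leftrightarrow$ (ii).

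The main point to watch, and the only real source of difficulty, is the bookkeeping under noncommutativity: the symmetric placements $T^{\frac12}(\,\cdot\,)T^{\frac12}$ appearing in (i) and $T^{-\frac12}S+ST^{-\frac12}$ in (ii) are essential, and the conjugation in the second step must be performed on both sides at once rather than by moving factors through the product. Beyond this careful symmetric manipulation there is no deep obstacle; the invertibility hypothesis does all the structural work by supplying $T^{-\frac12}$ and guaranteeing that the passage in both directions is valid.
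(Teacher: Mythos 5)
Your proposal is correct and follows essentially the same route as the paper: expand the sum in (ii) using $\sum_i(a_ia_i^*)=\langle x,x\rangle$ to reduce it to the identity $c_x=2-\frac{1}{\sqrt n}\bigl(\langle x,x\rangle^{-1/2}S+S\langle x,x\rangle^{-1/2}\bigr)$, and then pass to (i) by two-sided multiplication with $\langle x,x\rangle^{\pm 1/2}$, which is reversible by invertibility. The only difference is organizational (you reduce both statements to a common identity, the paper chains equivalences starting from (ii)), so there is nothing substantive to add.
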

  \begin{proof}
  We make expansion and see 
  	\begin{align*}
  	&\sum_{i=1}^{n}\left(\langle x, x \rangle^\frac{-1}{2}(a_ia_i^*)^\frac{1}{2}-\frac{1}{\sqrt{n}}\right)\left(\langle x, x \rangle^\frac{-1}{2}(a_ia_i^*)^\frac{1}{2}-\frac{1}{\sqrt{n}}\right)^*\\
  	&=\sum_{i=1}^{n}\left(\langle x, x \rangle^\frac{-1}{2}(a_ia_i^*)^\frac{1}{2}(a_ia_i^*)^\frac{1}{2}\langle x, x \rangle^\frac{-1}{2}+\frac{1}{n}-\frac{\langle x, x \rangle^\frac{-1}{2}(a_ia_i^*)^\frac{1}{2}}{\sqrt{n}}-\frac{(a_ia_i^*)^\frac{1}{2}\langle x, x \rangle^\frac{-1}{2}}{\sqrt{n}}\right)\\
  	&=\langle x, x \rangle^\frac{-1}{2}\left(\sum_{i=1}^{n}a_ia_i^*\right)\langle x, x \rangle^\frac{-1}{2}+1-\frac{\langle x, x \rangle^\frac{-1}{2}}{\sqrt{n}}\sum_{i=1}^{n}(a_ia_i^*)^\frac{1}{2}-\left(\sum_{i=1}^{n}(a_ia_i^*)^\frac{1}{2}\right)\frac{\langle x, x \rangle^\frac{-1}{2}}{\sqrt{n}}\\
  	&=2-\frac{\langle x, x \rangle^\frac{-1}{2}}{\sqrt{n}}\sum_{i=1}^{n}(a_ia_i^*)^\frac{1}{2}-\left(\sum_{i=1}^{n}(a_ia_i^*)^\frac{1}{2}\right)\frac{\langle x, x \rangle^\frac{-1}{2}}{\sqrt{n}}=c_x
  	\end{align*}
  	if and only if 
  	\begin{align*}
  	\frac{\langle x, x \rangle^\frac{-1}{2}}{\sqrt{n}}\sum_{i=1}^{n}(a_ia_i^*)^\frac{1}{2}+\left(\sum_{i=1}^{n}(a_ia_i^*)^\frac{1}{2}\right)\frac{\langle x, x \rangle^\frac{-1}{2}}{\sqrt{n}}=2-c_x
  	\end{align*}
  	if and only if 
  	
  	\begin{align*}
  	\left(\sum_{i=1}^{n}(a_ia_i^*)^\frac{1}{2}\right)\langle x, x \rangle^\frac{1}{2}+\langle x, x \rangle^\frac{1}{2}\sum_{i=1}^{n}(a_ia_i^*)^\frac{1}{2}=\sqrt{n}\langle x, x \rangle^\frac{1}{2}(2-c_x)\langle x, x \rangle^\frac{1}{2}.
  	\end{align*}
  \end{proof}
A particular case of Theorem \ref{GENERAL} which is very similar to Theorem \ref{CASAZZARESULT} is the following.
\begin{corollary}
Let  $x=(a_1, \dots, a_n) \in \mathcal{A}^n$ be such that $\langle x, x \rangle $ is invertible and commutes with $\sum_{i=1}^{n}(a_ia_i^*)^\frac{1}{2}$. The following are equivalent.
\begin{enumerate}[\upshape(i)]
	\item We have 
	\begin{align*}
	\sum_{i=1}^{n}(a_ia_i^*)^\frac{1}{2}=\left(1-\frac{c_x}{2}\right) \sqrt{n} \langle x, x \rangle^\frac{1}{2}=\sqrt{n} \langle x, x \rangle^\frac{1}{2}\left(1-\frac{c_x}{2}\right).
	\end{align*}
	\item We have 
	\begin{align*}
	\sum_{i=1}^{n}\left(\langle x, x \rangle^\frac{-1}{2}(a_ia_i^*)^\frac{1}{2}-\frac{1}{\sqrt{n}}\right)\left(\langle x, x \rangle^\frac{-1}{2}(a_ia_i^*)^\frac{1}{2}-\frac{1}{\sqrt{n}}\right)^*=c_x.
	\end{align*}
\end{enumerate}
In particular, 
\begin{align*}
\sum_{i=1}^{n}(a_ia_i^*)^\frac{1}{2}\leq \sqrt{s} \langle x, x \rangle^\frac{1}{2} \iff \left(1-\frac{c_x}{2}\right) \sqrt{n} \leq \sqrt{s} \iff 1-\frac{c_x}{2} \leq \sqrt{\frac{s}{n}}   .
\end{align*}
\end{corollary}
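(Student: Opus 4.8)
The plan is to deduce this directly from Theorem \ref{GENERAL}, whose condition (ii) is \emph{verbatim} the condition (ii) stated here; hence it suffices to show that, under the additional commutativity hypothesis, condition (i) of Theorem \ref{GENERAL} is equivalent to condition (i) of the corollary. Throughout I would write $S\coloneqq\sum_{i=1}^n(a_ia_i^*)^{1/2}$ and $T\coloneqq\langle x,x\rangle^{1/2}$, so that $T$ is positive and invertible with $T^2=\langle x,x\rangle$. Since $\langle x,x\rangle$ commutes with $S$ by hypothesis, and both $T=\langle x,x\rangle^{1/2}$ and $T^{-1}$ are norm-limits of polynomials in $\langle x,x\rangle$, the elements $T$ and $T^{-1}$ also commute with $S$.

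First I would record the explicit shape of $c_x$. The expansion carried out in the proof of Theorem \ref{GENERAL} yields $c_x=2-\tfrac{1}{\sqrt n}\bigl(T^{-1}S+ST^{-1}\bigr)$, and under the commutativity just observed this collapses to $c_x=2-\tfrac{2}{\sqrt n}T^{-1}S=2-\tfrac{2}{\sqrt n}ST^{-1}$. In particular $c_x$ lies in the commutative C*-subalgebra generated by $S$, $T$ and the unit, so $c_x$ commutes with both $S$ and $T$, and $1-\tfrac{c_x}{2}=\tfrac{1}{\sqrt n}T^{-1}S$. Next I would verify the equivalence of the two forms of (i): using $ST=TS$ the left-hand side of Theorem \ref{GENERAL}(i) equals $2ST$, while using $c_xT=Tc_x$ its right-hand side equals $\sqrt n\,(2-c_x)T^2$; cancelling one invertible factor $T$ shows Theorem \ref{GENERAL}(i) is equivalent to $2S=\sqrt n\,(2-c_x)T$, that is, to $S=\bigl(1-\tfrac{c_x}{2}\bigr)\sqrt n\,T$. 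The second equality $\bigl(1-\tfrac{c_x}{2}\bigr)\sqrt n\,T=\sqrt n\,T\bigl(1-\tfrac{c_x}{2}\bigr)$ is immediate from $c_xT=Tc_x$. This establishes (i) $\Leftrightarrow$ (ii).

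Finally, for the \emph{in particular} clause I would work under (i), so $S=\bigl(1-\tfrac{c_x}{2}\bigr)\sqrt n\,T$, and set $d\coloneqq\sqrt s-\bigl(1-\tfrac{c_x}{2}\bigr)\sqrt n$, a self-adjoint element commuting with the positive invertible $T$. Then the inequality $S\le\sqrt s\,T$ is exactly $dT\ge 0$. Since $d$ commutes with $T$ it commutes with $T^{1/2}$, whence $dT=T^{1/2}dT^{1/2}$; as conjugation by the invertible $T^{\pm1/2}$ preserves positivity, $dT\ge0\Leftrightarrow d\ge0$, i.e. $\bigl(1-\tfrac{c_x}{2}\bigr)\sqrt n\le\sqrt s$, and dividing by the positive scalar $\sqrt n$ gives the last equivalence. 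The only genuinely delicate points are the passage from ``$\langle x,x\rangle$ commutes with $S$'' to ``$c_x$ commutes with $T$'' (which the explicit formula for $c_x$ settles) and the positivity equivalence $dT\ge0\Leftrightarrow d\ge0$, which rests on the invertibility of $T$; everything else is bookkeeping with mutually commuting elements.
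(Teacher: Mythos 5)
Your proposal is correct and follows exactly the route the paper intends: the corollary is stated as ``a particular case of Theorem \ref{GENERAL}'' with no written proof, and you supply precisely the missing specialization, checking that commutation of $\langle x,x\rangle$ with $\sum_{i=1}^{n}(a_ia_i^*)^{1/2}$ passes to $\langle x,x\rangle^{\pm 1/2}$ and to $c_x$, so that Theorem \ref{GENERAL}(i) collapses to the one-sided identity after cancelling an invertible factor. Your justification of the ``in particular'' clause via $dT=T^{1/2}dT^{1/2}$ and invariance of positivity under conjugation by invertibles is a correct (and welcome) filling-in of a step the paper leaves implicit.
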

We next derive a result which gives one sided implication in Theorem \ref{CASAZZARESULT}. For this, we need to generalize Definition \ref{CASAZZADEFINITION}.
\begin{definition}
	A vector $x=\frac{1}{\sqrt{n}}(c_1, \dots, c_n) \in \mathcal{A}^n$ is said to be a constant modulus vector if $c_ic_i^*=1$, for all $i=1,\dots, n$.	
\end{definition}
Recall that an element $a$ in a unital C*-algebra $\mathcal{A}$ is said to be an isometry if $a^*a=1$. Thus a vector  is a constant modulus vector is adjoint of each of its coordinates is an isometry upto scalar.
  \begin{proposition}\label{PREVIOUS}
  	Let  $x=(a_1, \dots, a_n) \in \mathcal{A}^n$ be such that $a_ia_i^*$ is invertible for each $i$.  Define 
  	\begin{align*}
  	c_x \coloneqq 2-\frac{\langle x, x \rangle^\frac{-1}{2}}{\sqrt{n}}\sum_{i=1}^{n}(a_ia_i^*)^\frac{1}{2}-\left(\sum_{i=1}^{n}(a_ia_i^*)^\frac{1}{2}\right)\frac{\langle x, x \rangle^\frac{-1}{2}}{\sqrt{n}}.
  	\end{align*}
  	Then the infimum of the distance from $\langle x, x \rangle^\frac{-1}{2}x$ to the constant modulus vector is less than or equal to $\sqrt{\|c_x\|}$.
  \end{proposition}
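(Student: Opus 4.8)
The plan is to produce a single explicit constant modulus vector that realizes the desired bound, after which the infimum is estimated by this one competitor. Recall that the relevant distance is the Hilbert C*-module norm $\|u-v\|=\|\langle u-v,u-v\rangle\|^{\frac12}$ on $\mathcal{A}^n$, and write $u=\langle x,x\rangle^{-\frac12}x$, whose $i$-th coordinate is $\langle x,x\rangle^{-\frac12}a_i$.

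First I would identify the correct competitor. In the scalar case of Theorem \ref{CASAZZARESULT}(iii) the optimal constant modulus vector is the vector of phases $a_i/|a_i|$; the noncommutative analogue is the partial isometry appearing in the polar decomposition of $a_i$. Since $a_ia_i^*$ is invertible, set $v_i:=(a_ia_i^*)^{-\frac12}a_i$ and observe that $v_iv_i^*=(a_ia_i^*)^{-\frac12}(a_ia_i^*)(a_ia_i^*)^{-\frac12}=1$, so that $v:=\frac{1}{\sqrt{n}}(v_1,\dots,v_n)$ is a constant modulus vector. It then suffices to show $\langle u-v,u-v\rangle=c_x$.

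Next I would compute this inner product directly. Writing $b_i:=(a_ia_i^*)^{\frac12}$ and $S:=\langle x,x\rangle^{-\frac12}$ (both self-adjoint), the $i$-th coordinate of $u-v$ is $\big(S-\tfrac{1}{\sqrt{n}}b_i^{-1}\big)a_i$, so that, using $a_ia_i^*=b_i^2$,
\begin{align*}
\langle u-v,u-v\rangle=\sum_{i=1}^{n}\Big(S-\tfrac{1}{\sqrt{n}}b_i^{-1}\Big)b_i^2\Big(S-\tfrac{1}{\sqrt{n}}b_i^{-1}\Big).
\end{align*}
Expanding each summand, the diagonal terms yield $Sb_i^2S$ and $\tfrac{1}{n}b_i^{-1}b_i^2b_i^{-1}=\tfrac{1}{n}$, while the cross terms give $-\tfrac{1}{\sqrt{n}}(Sb_i+b_iS)$. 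Summing over $i$ and using $\sum_i b_i^2=\sum_i a_ia_i^*=\langle x,x\rangle=S^{-2}$ collapses $S\big(\sum_i b_i^2\big)S$ to $1$ and $\tfrac{1}{n}\sum_i 1$ to $1$, leaving exactly $2-\tfrac{S}{\sqrt{n}}\sum_i b_i-\tfrac{1}{\sqrt{n}}\big(\sum_i b_i\big)S=c_x$.

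Finally this gives $\|u-v\|=\|c_x\|^{\frac12}$, and since $v$ is one particular constant modulus vector, the infimum over all such vectors is at most $\sqrt{\|c_x\|}$, as claimed. The only real content is the choice of competitor; the computation then mirrors the expansion already carried out in the proof of Theorem \ref{GENERAL}, with noncommutativity causing no difficulty because the mixed term $b_i^{-1}b_i^2b_i^{-1}$ reduces to the identity regardless of ordering. I therefore do not expect an obstacle in proving the stated inequality itself; the genuinely hard question, left open here, is whether this competitor is optimal, i.e.\ whether the infimum in fact equals $\sqrt{\|c_x\|}$ rather than merely being bounded above by it, which is exactly why only a one-sided statement is claimed.
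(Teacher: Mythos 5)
Your proof is correct and takes essentially the same route as the paper: the paper's own argument uses the identical competitor $\frac{1}{\sqrt{n}}\left((a_1a_1^*)^{-\frac12}a_1, \dots, (a_na_n^*)^{-\frac12}a_n\right)$ and the same expansion of the inner product. If anything, your observation that the expansion gives the exact identity $\langle u-v,u-v\rangle=c_x$ is slightly cleaner than the paper's writeup, which records that step as an inequality even though it holds with equality.
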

  \begin{proof}
  	Note that the condition $a_ia_i^*$ is invertible for each $i$ implies that   $\langle x, x \rangle $ is invertible. Now consider the vector $\frac{1}{\sqrt{n}}((a_1a_1^*)^\frac{-1}{2}a_1, \dots, (a_na_n^*)^\frac{-1}{2}a_n)$, which is unit modulus. Using the definition of infimum and by an expansion we get 
  	
  	\begin{align*}
  	&\inf\left\{\|\langle x, x \rangle^\frac{-1}{2}x-y\|: y=\frac{1}{\sqrt{n}}(c_1, \dots, c_n) \in \mathcal{A}^n \text{ is constant modulus vector}\right\}\\
  	&~= \inf\left\{\left\|\sum_{i=1}^{n}\left(\langle x, x \rangle^\frac{-1}{2}a_i-\frac{c_i}{\sqrt{n}}\right)\left(\langle x, x \rangle^\frac{-1}{2}a_i-\frac{c_i}{\sqrt{n}}\right)^*\right\|^\frac{1}{2}:c_i \in \mathcal{A}, c_ic_i^*=1,i=1,\dots, n\right\}\\
  	&~\leq \left\|\sum_{i=1}^{n}\left(\langle x, x \rangle^\frac{-1}{2}a_i-\frac{(a_ia_i^*)^\frac{-1}{2}a_i}{\sqrt{n}}\right)\left(\langle x, x \rangle^\frac{-1}{2}a_i-\frac{(a_ia_i^*)^\frac{-1}{2}a_i}{\sqrt{n}}\right)^*\right\|^\frac{1}{2}\\
  	&~\leq\left\|\sum_{i=1}^{n}\langle x, x \rangle^\frac{-1}{2}a_ia_i^*\langle x, x \rangle^\frac{-1}{2}-\frac{\langle x, x \rangle^\frac{-1}{2}}{\sqrt{n}}\sum_{i=1}^{n}(a_ia_i^*)^\frac{1}{2}-\left(\sum_{i=1}^{n}(a_ia_i^*)^\frac{1}{2}\right)\frac{\langle x, x \rangle^\frac{-1}{2}}{\sqrt{n}}+1\right\|^\frac{1}{2}\\
  	&~=\left\|2-\frac{\langle x, x \rangle^\frac{-1}{2}}{\sqrt{n}}\sum_{i=1}^{n}(a_ia_i^*)^\frac{1}{2}-\left(\sum_{i=1}^{n}(a_ia_i^*)^\frac{1}{2}\right)\frac{\langle x, x \rangle^\frac{-1}{2}}{\sqrt{n}}\right\|^\frac{1}{2}=\|c_x\|^\frac{1}{2}.
  	 	\end{align*}
  		
  \end{proof}
Proposition \ref{PREVIOUS} and Theorem \ref{CASAZZARESULT} lead to the following question: Does converse of Proposition \ref{PREVIOUS} hold?  We see that when $n=1$, $c_x=0$ and hence converse holds. It is not known that for $n\geq2$. Next we derive a result which concerns the  $\ell_1-\ell_2$ inequality for submodules of Hilbert C*-modules. 
\begin{proposition}\label{NEXT}
Let $\mathcal{N}$	be a submodule of $\mathcal{A}^n$ and $x \in \mathcal{N}$ be a vector such that $\langle x, x \rangle =1$.
If the distance of  $x$ to the constant modulus vector is greater than or equal to $c_x$, then 
\begin{align*}
c_x \leq \left\|2-\frac{2}{\sqrt{n}}\sum_{i=1}^{n}(a_ia_i^*)^\frac{1}{2}\right\|^\frac{1}{2}.
\end{align*}
\end{proposition}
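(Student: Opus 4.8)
The plan is to derive this as a one-line consequence of the distance computation already carried out in Proposition \ref{PREVIOUS}, specialized to the normalization $\langle x, x\rangle = 1$, and then to combine it with the hypothesized lower bound. Since $\langle x, x\rangle = \sum_{i=1}^{n} a_i a_i^* = 1$, we have $\langle x, x\rangle^{-1/2} = 1$, so that $\langle x, x\rangle^{-1/2} x = x$ and the element produced in Proposition \ref{PREVIOUS} collapses to $2 - \tfrac{2}{\sqrt{n}}\sum_{i=1}^{n} (a_i a_i^*)^{1/2}$. Thus the entire content reduces to bounding the infimum of $\|x - y\|$ over constant modulus vectors $y$ from above by $\|2 - \tfrac{2}{\sqrt{n}}\sum_{i=1}^{n} (a_i a_i^*)^{1/2}\|^{1/2}$, after which the claim is immediate.

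First I would exhibit a single constant modulus vector close to $x$, namely $y_0 = \tfrac{1}{\sqrt{n}}\bigl((a_1 a_1^*)^{-1/2} a_1, \dots, (a_n a_n^*)^{-1/2} a_n\bigr)$, whose $i$-th coordinate $c_i = (a_i a_i^*)^{-1/2} a_i$ indeed satisfies $c_i c_i^* = 1$. Since the infimum is at most the distance to any particular choice, it suffices to compute $\|x - y_0\|^2 = \bigl\|\sum_{i=1}^{n} (a_i - \tfrac{c_i}{\sqrt{n}})(a_i - \tfrac{c_i}{\sqrt{n}})^*\bigr\|$. The key algebraic simplification is $a_i c_i^* = a_i a_i^* (a_i a_i^*)^{-1/2} = (a_i a_i^*)^{1/2}$, and likewise $c_i a_i^* = (a_i a_i^*)^{1/2}$, so each summand becomes $a_i a_i^* - \tfrac{2}{\sqrt{n}}(a_i a_i^*)^{1/2} + \tfrac{1}{n}$; summing and invoking $\sum_{i=1}^{n} a_i a_i^* = 1$ yields exactly $2 - \tfrac{2}{\sqrt{n}}\sum_{i=1}^{n} (a_i a_i^*)^{1/2}$. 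Taking norms and square roots gives $\|x - y_0\| = \|2 - \tfrac{2}{\sqrt{n}}\sum_{i=1}^{n} (a_i a_i^*)^{1/2}\|^{1/2}$.

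The conclusion then follows by chaining: the hypothesis (with $c_x$ read as the scalar lower bound for the distance) gives $c_x \leq \inf_y \|x - y\|$, while the computation above gives $\inf_y \|x - y\| \leq \|x - y_0\| = \|2 - \tfrac{2}{\sqrt{n}}\sum_{i=1}^{n} (a_i a_i^*)^{1/2}\|^{1/2}$, and the two combine to the stated inequality. The one point requiring care---and the main obstacle---is that Proposition \ref{NEXT} does not assume each $a_i a_i^*$ invertible, whereas the explicit vector $y_0$ uses $(a_i a_i^*)^{-1/2}$. I expect to treat a singular coordinate either by replacing $(a_i a_i^*)^{-1/2} a_i$ with the regularized $(a_i a_i^* + \varepsilon)^{-1/2} a_i$ and letting $\varepsilon \downarrow 0$ (checking that the constant-modulus condition is recovered and that the distance converges to the claimed expression), or by using the partial isometry from the polar decomposition of $a_i$ in the enveloping von Neumann algebra. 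Apart from this regularization, the argument is a direct specialization of the expansion performed in Proposition \ref{PREVIOUS}.
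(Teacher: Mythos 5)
Your proposal is correct and follows essentially the same route as the paper: the paper likewise chains the hypothesized lower bound on the distance with the upper bound obtained by testing the infimum against the particular constant modulus vector $\frac{1}{\sqrt{n}}\left((a_1a_1^*)^{-1/2}a_1, \dots, (a_na_n^*)^{-1/2}a_n\right)$, invoking the expansion already carried out in Proposition \ref{PREVIOUS}. The invertibility concern you flag is genuine, but it is equally present in the paper's own proof, which silently reuses the computation from Proposition \ref{PREVIOUS} without restating that hypothesis.
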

\begin{proof}
By doing a similar calculation as in the proof of Proposition \ref{PREVIOUS} we get that 
\begin{align*}
c_x &\leq \inf\left\{\|x-y\|: y=\frac{1}{\sqrt{n}}(c_1, \dots, c_n) \in \mathcal{A}^n \text{ is a constant modulus vector}\right\}\\
&= \inf\left\{\left\|\sum_{i=1}^{n}\left(a_i-\frac{c_i}{\sqrt{n}}\right)\left(a_i-\frac{c_i}{\sqrt{n}}\right)^*\right\|^\frac{1}{2}:c_i \in \mathcal{A},c_ic_i^*=1,i=1,\dots, n\right\}\\
&\leq \left\|2-\frac{2}{\sqrt{n}}\sum_{i=1}^{n}(a_ia_i^*)^\frac{1}{2}\right\|^\frac{1}{2}.
\end{align*}
\end{proof}
Again a look at Proposition \ref{NEXT} and Theorem 2.4 in  \cite{BOTELHOCASAZZACHENGTRAN} which lead to the following question: Does converse of Proposition \ref{NEXT} hold?\\

  In the spirit of Theorem 3.1 in \cite{BOTELHOCASAZZACHENGTRAN}, we next give an  application of the previous theorem. For this we need  some concepts.\\
  Let $G$ be a compact Lie group and $\mu$ be the left Haar measure on  $G$ such that $\mu(G)=1$ (see \cite{SEPANSKI}). If $f, g:G \to \mathcal{A}$ are continuous functions, then we define
  \begin{align*}
  \langle f, g \rangle \coloneqq \int_{G}f(x)g(x)^*\,d\mu(x),
  \end{align*}
  where the integral is in the sense of G. G. Kasparov
  (see 
  \cite{KASPAROV, MANUILOVTROITSKY}). Now we can state the result.
  \begin{theorem}
  Let $G$ be a compact Lie group, $\mu(G)=1$, $f:G \to \mathcal{A}$ be continuous,  $f(x)\geq 0$, $\forall x \in G$ and $\langle f, f \rangle=1$.	The following are equivalent.
  		\begin{enumerate}[\upshape(i)]
  		\item We have
  		$
  		\int_{G}f(x)\,d\mu(x)=1-\frac{c}{2}.
  		$
  		\item We have 
  	$
  		\langle f-1, f-1 \rangle =c.
  	$
  	\end{enumerate}
  \end{theorem}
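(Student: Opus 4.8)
The plan is to expand the inner product $\langle f-1, f-1\rangle$ directly from the definition of the Kasparov integral, mirroring the algebraic expansion used in the proof of Theorem \ref{GENERAL}, and to show that both (i) and (ii) are equivalent to the single identity $\langle f-1, f-1\rangle = 2 - 2\int_G f(x)\,d\mu(x)$. Once that identity is in hand, the equivalence is a one-line rearrangement.

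First I would record the consequences of the hypothesis $f(x)\geq 0$. Positivity forces each $f(x)$ to be self-adjoint, so $(f(x)-1)^* = f(x)-1$, and moreover $(f(x)f(x)^*)^\frac{1}{2} = (f(x)^2)^\frac{1}{2} = f(x)$. This is the continuous counterpart of the simplification that replaces $(a_ia_i^*)^\frac{1}{2}$ by $a_i$ in the discrete setting, and it is precisely what makes $\int_G f\,d\mu$ the analogue of $\sum_{i=1}^n (a_ia_i^*)^\frac{1}{2}$, with $\mu(G)=1$ playing the role of $\sqrt{n}$ normalized to one.

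Next I would compute. Using pointwise self-adjointness,
\begin{align*}
(f(x)-1)(f(x)-1)^* = (f(x)-1)^2 = f(x)^2 - 2f(x) + 1,
\end{align*}
and integrating over $G$ while invoking the linearity of the Kasparov integral yields
\begin{align*}
\langle f-1, f-1\rangle = \int_G f(x)^2\,d\mu(x) - 2\int_G f(x)\,d\mu(x) + \int_G 1\,d\mu(x).
\end{align*}
Here $\int_G f(x)^2\,d\mu(x) = \int_G f(x)f(x)^*\,d\mu(x) = \langle f, f\rangle = 1$ by hypothesis, and $\int_G 1\,d\mu(x) = \mu(G) = 1$ since the Haar measure is normalized. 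Hence $\langle f-1, f-1\rangle = 2 - 2\int_G f(x)\,d\mu(x)$.

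Finally, from this identity the equivalence is immediate: if $\int_G f\,d\mu = 1-\tfrac{c}{2}$ then $\langle f-1, f-1\rangle = 2 - 2\left(1-\tfrac{c}{2}\right) = c$, and conversely $\langle f-1, f-1\rangle = c$ rearranges to $\int_G f\,d\mu = 1-\tfrac{c}{2}$. I do not expect any serious obstacle here; the only points requiring care are justifying the linearity of the Kasparov integral and the identification of $\int_G f(x)^2\,d\mu(x)$ with $\langle f,f\rangle$, both of which are standard properties of the integral referenced in \cite{KASPAROV, MANUILOVTROITSKY}. In contrast to Theorem \ref{GENERAL}, no noncommutativity difficulties arise, because the pointwise self-adjointness of $f(x)-1$ collapses the cross terms that otherwise would force the symmetrized form appearing in (i) of that theorem.
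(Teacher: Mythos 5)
Your proof is correct and takes essentially the same route as the paper: both reduce the statement to the single identity $\langle f-1, f-1\rangle = 2-2\int_{G}f(x)\,d\mu(x)$, which you obtain by direct bilinear expansion of $\langle f-1,f-1\rangle$ (using $f(x)^*=f(x)$, $\langle f,f\rangle=1$ and $\mu(G)=1$), while the paper gets it by expanding $\langle f+1,f+1\rangle$ inside the parallelogram-type identity $\langle f-1,f-1\rangle+\langle f+1,f+1\rangle=4$. The difference is purely cosmetic.
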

\begin{proof}
Consider 	
$
4=\langle f-1, f-1 \rangle+\langle f+1, f+1\rangle
=\langle f-1, f-1 \rangle +1+1+2\int_{G}f(x)\,d\mu(x)
=\langle f-1, f-1 \rangle +2+2\int_{G}f(x)\,d\mu(x)
$
which implies $\langle f-1, f-1 \rangle=2-2\int_{G}f(x)\,d\mu(x).$ Conclusion follows by taking $c=\langle f-1, f-1 \rangle=2-2\int_{G}f(x)\,d\mu(x).$
\end{proof}
\section{Exact constant for the continuous $\ell_1-\ell_2$ inequality}\label{CONTINUOUSSECION}
Let $X$ be a measure space with finite measure. Continuous Cauchy-Schwarz inequality tells that $\|f\|_1\leq \sqrt{\mu(X)}\|f\|_2$. Given  $f\in \mathcal{L}^2(X)$, we now derive a method for the exact constant in the equality $\|f\|_1= c_f \sqrt{\mu(X)}\|f\|_2$. For this, we reform the Definition \ref{CASAZZADEFINITION}.
\begin{definition}\label{DEFCON}
	A function  $f\in \mathcal{L}^2(X)$ is said to be a constant modulus function if  $|f(x)|=\frac{1}{\sqrt{\mu(X)}}, \forall x \in X$.
\end{definition}
Definition \ref{DEFCON} says that a function is constant modulus function if its image lies in the circle of radius $\frac{1}{\sqrt{\mu(X)}}$, centered at origin.
\begin{theorem}\label{IMP}
For $f\in \mathcal{L}^2(X)$, the following are equivalent.
\begin{enumerate}[\upshape(i)]
	\item We have 
	\begin{align*}
	\|f\|_1= \left(1-\frac{c_f}{2}\right) \sqrt{\mu(X)} \|f\|_2.
	\end{align*}
	\item We have 
	\begin{align*}
	\int_{X}\left|\frac{|f(x)|}{\|f\|_2}-\frac{1}{\sqrt{\mu(X)}}\right|^2\,d\mu(x)=c_f.
	\end{align*}
	\item The infimum of the distance from $\frac{f}{\|f\|_2}$ to the constant modulus function is $\sqrt{c_f}$.
\end{enumerate}	
In particular, 
\begin{align*}
 \|f\|_1\leq \sqrt{s} \|f\|_2 \iff \left(1-\frac{c_f}{2}\right) \sqrt{\mu(X)} \leq \sqrt{s} \iff 1-\frac{c_f}{2} \leq \sqrt{\frac{s}{\mu(X)}}   .
\end{align*}
\end{theorem}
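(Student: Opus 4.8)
The plan is to mirror the scalar argument of Theorem~\ref{CASAZZARESULT}, replacing finite sums by integrals over $X$ and assuming $f\neq 0$ so that $\|f\|_2>0$. For the equivalence (i) $\iff$ (ii) I would expand the square in (ii); since $|f(x)|$, $\|f\|_2$ and $\mu(X)$ are nonnegative reals, the modulus squared is an ordinary square, and the integral splits as
\begin{align*}
\int_X\frac{|f(x)|^2}{\|f\|_2^2}\,d\mu(x)-\frac{2}{\|f\|_2\sqrt{\mu(X)}}\int_X|f(x)|\,d\mu(x)+\int_X\frac{1}{\mu(X)}\,d\mu(x)=1-\frac{2\|f\|_1}{\|f\|_2\sqrt{\mu(X)}}+1.
\end{align*}
Thus the left-hand side of (ii) equals $2-\frac{2\|f\|_1}{\|f\|_2\sqrt{\mu(X)}}$; equating this to $c_f$ and solving for $\|f\|_1$ produces precisely the equation in (i), and since every step is reversible, (i) and (ii) are equivalent.

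For (ii) $\iff$ (iii) I would work with an arbitrary constant modulus function $h$, so $|h(x)|=1/\sqrt{\mu(X)}$ everywhere and hence $\|h\|_2=1$. Expanding the distance gives
\begin{align*}
\left\|\frac{f}{\|f\|_2}-h\right\|_2^2=2-\frac{2}{\|f\|_2}\,\mathrm{Re}\int_X f(x)\overline{h(x)}\,d\mu(x),
\end{align*}
so minimizing the distance is the same as maximizing the real part of the inner product. The pointwise bound $\mathrm{Re}\big(f(x)\overline{h(x)}\big)\le |f(x)|/\sqrt{\mu(X)}$ holds with equality exactly when the phase of $h(x)$ matches that of $f(x)$; taking $h(x)=\frac{1}{\sqrt{\mu(X)}}\,\frac{f(x)}{|f(x)|}$ on $\{f\neq 0\}$ and any fixed unit phase elsewhere attains the maximum $\|f\|_1/\sqrt{\mu(X)}$. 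Hence the infimum of the squared distance is $2-\frac{2\|f\|_1}{\|f\|_2\sqrt{\mu(X)}}=c_f$ by the identity above, so the infimum distance is $\sqrt{c_f}$, which is (iii). The step requiring the most care is this one: I must check that the phase-matched candidate is a genuine measurable constant modulus function and that the optimization survives on the null set $\{f=0\}$. The latter is harmless, since there $\mathrm{Re}(f(x)\overline{h(x)})=0=|f(x)|$ for every phase, and measurability of $f/|f|$ off the zero set is standard; the only analytic input is the pointwise estimate $|f(x)\overline{h(x)}|=|f(x)|/\sqrt{\mu(X)}$.

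Finally, for the ``in particular'' chain I would substitute the identity of (i) into $\|f\|_1\le\sqrt{s}\|f\|_2$, cancel $\|f\|_2>0$ to obtain $\big(1-\frac{c_f}{2}\big)\sqrt{\mu(X)}\le\sqrt{s}$, and divide by $\sqrt{\mu(X)}$ to reach $1-\frac{c_f}{2}\le\sqrt{s/\mu(X)}$; each implication is reversible, which closes the proof.
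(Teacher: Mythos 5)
Your proposal is correct and follows essentially the same route as the paper: expand the square in (ii) to obtain $2-\frac{2\|f\|_1}{\|f\|_2\sqrt{\mu(X)}}$, and compute the infimum of the distance to constant modulus functions by maximizing $\mathrm{Re}\int_X f\overline{g}\,d\mu$. In fact you supply a detail the paper leaves implicit, namely the phase-matching choice $h=\frac{1}{\sqrt{\mu(X)}}\frac{f}{|f|}$ showing that the infimum is attained (though note $\{f=0\}$ need not be a null set, your argument there is still valid).
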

\begin{proof}
(i) $\iff$ (ii) Starting from the integral in (ii)	we see that 

\begin{align*}
\int_{X}\left|\frac{|f(x)|}{\|f\|_2}-\frac{1}{\sqrt{\mu(X)}}\right|^2\,d\mu(x)&=\frac{1}{\|f\|_2^2}\int_{X}|f(x)|^2\,d\mu(x)+\frac{1}{\mu(X)}\int_{X}\,d\mu(x)\\
&\quad-2\frac{1}{\|f\|_2\sqrt{\mu(X)}}\int_{X}|f(x)|\,d\mu(x)\\
&=2\left(1-\frac{1}{\|f\|_2\sqrt{\mu(X)}}\int_{X}|f(x)|\,d\mu(x)\right)\\
&=c_f
\end{align*}
if and only if 
\begin{align*}
\frac{1}{\|f\|_2\sqrt{\mu(X)}}\int_{X}|f(x)|\,d\mu(x)=1-\frac{c_f}{2}
\end{align*}
if and only if 
\begin{align*}
\int_{X}|f(x)|\,d\mu(x)=\left(1-\frac{c_f}{2}\right)\|f\|_2\sqrt{\mu(X)}.
\end{align*}
(i) $\iff$ (iii) This follows from the calculation 
\begin{align*}
&\inf\left\{\left\|\frac{f}{\|f\|_2}-g\right\|_2: g \in \mathcal{L}^2(X) \text{ is constant modulus function}\right\}\\
&~=\inf\left\{\left(\int_{X}\left|\frac{f(x)}{\|f\|_2}-g(x)\right|^2\,d\mu(x)\right)^\frac{1}{2}: g \in \mathcal{L}^2(X) \text{ is a constant modulus function}\right\}\\
&~=\inf\bigg\{\left(\int_{X}\left|\frac{f(x)}{\|f\|_2}\right|^2\,d\mu(x)+\int_{X}|g(x)|^2\,d\mu(x)-\frac{2}{\|f\|_2}\text{Re}\left(\int_{X}f(x)\overline{g(x)}\,d\mu(x)\right)\right)^\frac{1}{2}:\\
&\quad \quad g \in \mathcal{L}^2(X) \text{ is a constant modulus function}\bigg\}\\
&~=\inf\bigg\{\left(1+1-\frac{2}{\|f\|_2}\text{Re}\left(\int_{X}f(x)\overline{g(x)}\,d\mu(x)\right)\right)^\frac{1}{2}:
 g \in \mathcal{L}^2(X) \text{ is a constant modulus function}\bigg\}\\
 &~=\left(2-\frac{2}{\sqrt{\mu(X)}\|f\|_2}\int_{X}|f(x)|\,d\mu(x)\right)^\frac{1}{2}.
\end{align*}
\end{proof}
To obtain further results we need a result whose proof will  follow from the routine argument using Hilbert projection theorem.
\begin{theorem}\label{IMPROVEDPROJECTION}
Let  $\mathcal{K}$ be a closed subspace of a Hilbert space $\mathcal{H}$ and let  $ P:\mathcal{H} \to \mathcal{K}$ be onto orthogonal projection. Then for each $ h \in \mathcal{H}$ with $Ph\neq0$, $\frac{Ph}{\|Ph\|}$ is the closest unit vector in $\mathcal{K}$ to $h$.
\end{theorem}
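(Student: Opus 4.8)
The plan is to reduce the constrained minimization over unit vectors in $\mathcal{K}$ to a single application of the Cauchy--Schwarz inequality, letting the orthogonal projection do the structural work first. I would fix $h \in \mathcal{H}$ with $Ph \neq 0$ and use the orthogonal decomposition $h = Ph + (h - Ph)$, where $Ph \in \mathcal{K}$ and $h - Ph \in \mathcal{K}^\perp$. For an arbitrary unit vector $k \in \mathcal{K}$, the difference $Ph - k$ lies in $\mathcal{K}$ while $h - Ph$ lies in $\mathcal{K}^\perp$, so the Pythagorean theorem gives
\begin{align*}
\|h - k\|^2 = \|Ph - k\|^2 + \|h - Ph\|^2.
\end{align*}
Since the term $\|h - Ph\|^2$ is independent of $k$, minimizing $\|h - k\|$ over unit vectors $k \in \mathcal{K}$ is equivalent to minimizing $\|Ph - k\|$ over the same set.

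Next I would expand the surviving quantity. Writing $v = Ph$ and using $\|k\| = 1$,
\begin{align*}
\|v - k\|^2 = \|v\|^2 + 1 - 2\,\text{Re}\,\langle v, k\rangle,
\end{align*}
so the problem reduces to maximizing $\text{Re}\,\langle v, k\rangle$ over unit vectors $k \in \mathcal{K}$. By the Cauchy--Schwarz inequality, $\text{Re}\,\langle v, k\rangle \leq |\langle v, k\rangle| \leq \|v\|$, and this upper bound is attained precisely when $k = v/\|v\| = Ph/\|Ph\|$, for which $\langle v, k\rangle = \|v\|$ is real and nonnegative. Hence $Ph/\|Ph\|$ is the unique unit vector in $\mathcal{K}$ realizing the minimum, which is exactly the assertion.

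There is no genuine obstacle here; the statement is a standard consequence of the projection structure, and this is why the excerpt remarks that it follows from a routine argument. The only points deserving care are that $Ph/\|Ph\|$ is a legitimate unit vector lying in $\mathcal{K}$ --- this is where the hypothesis $Ph \neq 0$ enters --- and the clean separation of the $k$-dependent and $k$-independent contributions afforded by the orthogonality $h - Ph \perp \mathcal{K}$. Everything else is the equality case of Cauchy--Schwarz.
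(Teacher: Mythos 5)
Your proof is correct and is precisely the ``routine argument using the Hilbert projection theorem'' that the paper invokes without writing out: the paper supplies no proof of this theorem, and your Pythagorean reduction to minimizing $\|Ph-k\|$ followed by the equality case of Cauchy--Schwarz is the standard argument (and is consistent with the computation $\|Pf/\|Pf\|-f\|^2=2-2\|Pf\|$ the paper later performs when applying the theorem). No gaps.
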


We now use Theorem  \ref{IMPROVEDPROJECTION} to obtain relations between closed subspaces of $ \mathcal{L}^2(X)$ and continuouss $\ell_1-\ell_2$ inequality.
\begin{theorem}\label{PRETHM}
Let  $W$ be a  closed subspace of $ \mathcal{L}^2(X)$ and let  $ P:\mathcal{L}^2(X) \to W$ be onto orthogonal projection. Then the following are equivalent.	
	\begin{enumerate}[\upshape(i)]
		\item For every unit vector $f \in W$,
		$
		\|f\|_1\leq \left(1-\frac{c_f}{2}\right)\sqrt{\mu(X)}.
		$
		\item The distance of any unit vector in $ W$ to any constant modulus function $ f \in W$ is greater than or equal to $\sqrt{c_f}$.
		\item For every constant modulus function $f \in W$,
		$
		\|Pf\|_2\leq 1-\frac{c_f}{2}.
		$
	\end{enumerate}
\end{theorem}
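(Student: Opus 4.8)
The plan is to establish the equivalences by proving (i) $\iff$ (ii) and (ii) $\iff$ (iii), each time reducing the given statement to a single scalar inequality of the form $\|Pg\|_2 \le 1 - \tfrac{c_f}{2}$ for constant modulus functions $g$, exactly as in the finite-dimensional template of Theorem \ref{CASAZZAPRO} but with the projection input now supplied by Theorem \ref{IMPROVEDPROJECTION}. As a preliminary normalization I would record that, by Definition \ref{DEFCON}, every constant modulus function $g$ satisfies $\int_X |g|^2\,d\mu = \mu(X)\cdot \mu(X)^{-1} = 1$, so constant modulus functions are automatically unit vectors of $\mathcal{L}^2(X)$; this is precisely what makes the distance computations below close up cleanly.

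For (i) $\iff$ (ii) I would specialize Theorem \ref{IMP} to unit vectors of $W$. For $f \in W$ with $\|f\|_2 = 1$, the computation carried out in the proof of Theorem \ref{IMP} yields $\mathrm{dist}(f,\mathrm{CMF})^2 = 2 - \tfrac{2}{\sqrt{\mu(X)}}\|f\|_1$, where $\mathrm{CMF}$ denotes the set of constant modulus functions. Rearranging this identity shows that the bound $\|f\|_1 \le (1 - \tfrac{c_f}{2})\sqrt{\mu(X)}$ holds if and only if $\mathrm{dist}(f,\mathrm{CMF}) \ge \sqrt{c_f}$. Quantifying over all unit vectors $f \in W$ then turns statement (i) into statement (ii) verbatim, so no further work is needed for this half.

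The substantive step is (ii) $\iff$ (iii), and this is where I expect the main work to lie. Fixing a constant modulus function $g$ (a unit vector), I would compute the distance from $g$ to the unit sphere of $W$. By Theorem \ref{IMPROVEDPROJECTION}, when $Pg \ne 0$ the nearest unit vector of $W$ to $g$ is $u := Pg/\|Pg\|_2$; expanding $\|g-u\|_2^2$ and using $\langle g,Pg\rangle = \langle Pg,Pg\rangle = \|Pg\|_2^2$ (self-adjointness and idempotence of $P$) collapses it to $2 - 2\|Pg\|_2$, while the degenerate case $Pg = 0$ gives distance $\sqrt{2}$, consistent with the same formula. Hence $\mathrm{dist}(g,\text{unit sphere of }W) = \sqrt{2 - 2\|Pg\|_2}$, and the requirement that this be at least $\sqrt{c_f}$ is exactly $\|Pg\|_2 \le 1 - \tfrac{c_f}{2}$. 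Ranging over all constant modulus functions $g$ identifies (ii) with (iii).

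The hard part will be the bookkeeping around quantifiers and normalizations rather than any deep estimate: one must ensure that in (iii) the constant modulus functions range over all of $\mathcal{L}^2(X)$ (and not only over $W$, where $Pg = g$ would force the trivial value $\|Pg\|_2 = 1$), and that the infimum defining the distance in (ii) is genuinely attained at the normalized projection delivered by Theorem \ref{IMPROVEDPROJECTION} rather than at some function outside the reach of that lemma. Once the distance formula $\mathrm{dist}(g,\text{unit sphere of }W) = \sqrt{2 - 2\|Pg\|_2}$ is in hand, all three conditions are seen to be reformulations of the single uniform inequality $\|Pg\|_2 \le 1 - \tfrac{c_f}{2}$, and the equivalences follow at once.
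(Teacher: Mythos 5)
Your proposal is correct and follows essentially the same route as the paper: (i) $\iff$ (ii) via the distance-to-constant-modulus-functions computation from the proof of Theorem \ref{IMP} specialized to unit vectors, and (ii) $\iff$ (iii) via Theorem \ref{IMPROVEDPROJECTION} and the identity $\bigl\|\frac{Pf}{\|Pf\|}-f\bigr\|^2 = 2-2\|Pf\|$. Your explicit remarks on the normalization of constant modulus functions, the degenerate case $Pf=0$, and the quantifier over which set the constant modulus functions range are minor refinements the paper glosses over, but the argument is the same.
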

\begin{proof}
(i) $\iff$ (ii)	We do a similar calculation as in the proof of Theorem \ref{IMP} and get 
	\begin{align*}
	&\inf\left\{\left\|f-g\right\|_2: g \in \mathcal{L}^2(X) \text{ is a constant modulus function}\right\}\\
	&~=\inf\left\{\left(\int_{X}\left|f(x)-g(x)\right|^2\,d\mu(x)\right)^\frac{1}{2}: g \in \mathcal{L}^2(X) \text{ is a constant modulus function}\right\}\\
	&~=\inf\bigg\{\left(\int_{X}\left|f(x)\right|^2\,d\mu(x)+\int_{X}|g(x)|^2\,d\mu(x)-2\text{Re}\left(\int_{X}f(x)\overline{g(x)}\,d\mu(x)\right)\right)^\frac{1}{2}:\\
	&\quad \quad g \in \mathcal{L}^2(X) \text{ is a constant modulus function}\bigg\}\\
	&~=\inf\bigg\{\left(1+1-2\text{Re}\left(\int_{X}f(x)\overline{g(x)}\,d\mu(x)\right)\right)^\frac{1}{2}:
	g \in \mathcal{L}^2(X) \text{ is a constant modulus function}\bigg\}\\
	&~=\left(2-\frac{2}{\sqrt{\mu(X)}}\int_{X}|f(x)|\,d\mu(x)\right)^\frac{1}{2}.
	\end{align*}
	Therefore \\
	\begin{align*}
	\sqrt{c_f } \leq \left(2-\frac{2}{\sqrt{\mu(X)}}\int_{X}|f(x)|\,d\mu(x)\right)^\frac{1}{2} 
	\end{align*}
	if and only if 
	\begin{align*}
	\|f\|_1=\int_{X}|f(x)|\,d\mu(x)\leq \left(1-\frac{c_f}{2}\right)\sqrt{\mu(X)} .
	\end{align*}
	(ii) $\iff$ (iii) Let $ f \in W$ be constant modulus function. 
In view of 	Theorem \ref{IMPROVEDPROJECTION} we calculate 
\begin{align*}
\left\|\frac{Pf}{\|Pf\|}-f\right\|^2&=1+1-\left \langle \frac{Pf}{\|Pf\|}, f\right\rangle -\left \langle f, \frac{Pf}{\|Pf\|}\right\rangle=2-\left \langle \frac{P^2f}{\|Pf\|}, f\right\rangle -\left \langle f, \frac{P^2f}{\|Pf\|}\right\rangle\\
&=2-2\|Pf\|.
\end{align*}
Therefore 
\begin{align*}
c_f \leq  \left\|\frac{Pf}{\|Pf\|}-f\right\|   \text{\quad if and only if \quad }  \|Pf\| \leq   1-\frac{c_f}{2}.
\end{align*}
\end{proof}

  \section{Acknowledgements}
  First author thanks National Institute of Technology (NITK) Surathkal for
  financial assistance.

 \bibliographystyle{plain}
 \bibliography{reference.bib}

\end{document}